\newcolumntype{d}[1]{D{.}{.}{#1}}
\newcommand     {\C}            {{\mathbb C}}
\newcommand     {\Mp}[1][\Z]    {\sym{Mp}(2,{#1})}
\newcommand     {\PH}           {{\mathbb H}}
\newcommand     {\SL}[1][\Z]    {\sym{SL}(2,{#1})}
\newcommand     {\Z}            {{\mathbb Z}}
\newcommand     {\leg}[2]       {\left(\frac {#1}{#2}\right)}
\newcommand     {\mat}[4]       {\left(\begin{smallmatrix} #1&#2 \\ #3&#4\end{smallmatrix}\right)}
\newcommand     {\psp}[2]       {\langle #1|#2 \rangle}
\newcommand     {\sym}[1]       {\operatorname{#1}}
\theoremstyle{plain}
\newtheorem*{Main Theorem}{Main Theorem}
\newtheorem*{Supplement}{Supplement to the Main Theorem}
\newtheorem{Theorem}{Theorem}
\newtheorem*{Lemma}{Lemma}
\theoremstyle{definition}
\newtheorem*{Remark}{Remark}
\title[Numerical Computation of a Certain Dirichlet Series]{%
  Numerical Computation of a Certain Dirichlet Series attached to
  Siegel Modular Forms of Degree~Two }
\author{%
  Nathan C. Ryan, Nils-Peter Skoruppa, Fredrik Str\"omberg
}
\address{%
  Department of Mathematics, Bucknell University\endgraf
  nathan.ryan@bucknell.edu\endgraf
  \null
  Fachbereich Mathematik, Universit\"at Siegen
  \endgraf
  nils.skoruppa@uni-siegen.de \endgraf \null Fachbereich Mathematik,
  TU-Darmstadt \endgraf stroemberg@mathematik.tu-darmstadt.de }
\thanks{%
  This project was supported by the National Science Foundation under
  FRG Grant No. DMS-0757627 and we also made use of hardware provided
  by DMS-0821725.
}
\subjclass[2000]{Primary 11F46, 11F66; Secondary 11F27, 11F50}
\begin{document}

\begin{abstract}
  The Rankin convolution type Dirichlet series $D_{F,G}(s)$ of Siegel
  modular forms $F$ and $G$ of degree two, which was introduced by
  Kohnen and the second author, is computed numerically for various
  $F$ and $G$.  In particular, we prove that the series $D_{F,G}(s)$,
  which share the same functional equation and analytic behavior with
  the spinor $L$-functions of eigenforms of the same weight are not linear
  combinations of those.  In order to conduct these experiments a
  numerical method to compute the Petersson scalar products of
  Jacobi Forms is developed and discussed in detail.
\end{abstract}

\maketitle

\section{Introduction and Statement of Results}
\label{sec:introduction}

For cusp forms $F$ and $G$ in the space $S_k$ of Siegel cusp forms of
degree 2 and even weight $k$ on the full Siegel modular group, let
$D_{F,G}(s)$ be the Rankin convolution type Dirichlet series which was
introduced in~\cite{Kohnen-Skoruppa}.  It was proved~\cite[first
theorem]{Kohnen-Skoruppa} that $D_{F,G}(s)$ shares the same analytic
properties and the same functional equation as the spinor $L$-function
$L(F,s)$ associated to a Hecke eigenform $F$ in~$S_k$. In fact, if $G$
is in the spezialschaar then one has~\cite[second
theorem]{Kohnen-Skoruppa} $D_{F,G}(s) = \psp {F_1}{G_1} L(F,s)$ (see
Section~\ref{sec:background} for notations). However, it was left open
whether $D_{F,G}(s)$, for {\em interesting} Hecke
eigenforms\footnote{Following the terminology of~\cite{Skoruppa2} we
  call a cusp form {\em interesting} if it belongs to the orthogonal
  complement of the Maass spezialschaar.} $F,G$, is related to the
spinor $L$-functions of the forms in $S_k$. In any case it cannot in
general simply satisfy the same identity as before since, for example
$D_{F,F}(s)$, for $F$ not in the Maass spezialschaar, has a simple
pole at $s=k$ with residue equal to the Petersson norm of $F$, whereas
$L(F,s)$ is holomorphic.  In fact, a spinor $L$-function of a Hecke
eigenform in $S_k$ has a pole if and only if $F$ belongs to the Maass
spezialschaar~\cite{Evdokimov}.

The first interesting Hecke eigenform is in weight $20$ and the space
of such forms is 1-dimensional in weights 20 and 22, is 2-dimensional
in weights 24 and 26 (which, remarkably, have rational Fourier
coefficients and eigenvalues), is 3-dimensional in weight 28 and
4-dimensional in weight 30. The interesting eigenforms in weights 20
up to 26 have rational Fourier coefficients (and eigenvalues), whereas
the eigenforms in weight 28 and 30 are conjugate under Galois
conjugation, respectively. The aim of this article is twofold. We
report the results of numerical experiments which we performed to
study the series $D_{F,G}$ for interesting eigenforms $F$ and $G$ in
weights 20 up to 30. Secondly, we develop and describe the necessary
theoretical tools for our experiments. Necessary to the computations
which we report about out in this note is also the ability to compute
Siegel modular forms, their Hecke eigenvalues and their spinor
$L$-series.  We do this via a Sage~\cite{Sage} package written by the
first two authors~\cite{Ryan-Skoruppa} (with the aid of several
people) and based on the algorithms developped
in~\cite{Skoruppa2}. The code and the data which we used for our
computations can be found under~\cite{Data}. The interested reader may
use these data to reproduce our experiments.

Our computational results are summarized as follows.
\begin{Main Theorem}
  \label{thm:main}
  For $k = 20, 22, 24, 26, 28, 30$, let $F$ and $G$ denote
  simultaneous eigenforms of weight $k$ that are cusp forms but not in
  the Maass spezialschaar.  Then $D_{F,G}(s)$ is not a linear
  combination of the spinor $L$-functions $L(H,s)$ where~$H$ ranges
  over all simultaneous eigenforms (including non cusp forms) of
  weight $k$.
\end{Main Theorem}
This result is surprising since, as already pointed out, the series
$D_{F,G}(s)$ shares the same functional equation and $\Gamma$-factors
as $L(F,s)$ and $L(G,s)$ yet seems not to be related to the usual
spinor $L$-functions. Note that the theorem is proved by numerical
computations. However, we made careful error estimates to ensure the
validity of our result.

In fact, we actually prove more. Namely, we show in
section~\ref{sec:proof} that all the series $D_{F,G}(s)$ and $L(H,s)$,
where $\{F,G\}$ ranges over all subsets with one or two elements of
the interesting eigenforms of weight $k$, and where $H$ ranges over
all eigenforms of weight $k$, are linearly independent.

Investigating the (numerically approximated) coefficients of the
series $D_{F,G}(s)$ more closely we find:
\begin{Supplement}
  \label{prop:mult}
  In the notations of the Main Theorem the coefficients of
  $D_{F,G}(s)$ are not multiplicative; that is, the (normalized)
  coefficients $a(n)$ do not satisfy $a(m)a(n)=a(mn)$ for all coprime
  pairs of integers $m$ and $n$.

\end{Supplement}
We remark that this supplement implies that the series $D_{F,G}(s)$,
for $F$ and $G$ not is the Maass spezialschaar, is not an $L$-function
in the sense of Selberg, i.e., does not belong to the so-called
Selberg class.  Note also that this is the second kind of Dirichlet
series which is attached to a Siegel modular form, has a meromorphic
continuation, satisfies a functional equation but is not an
$L$-function in the sense of Selberg, the other being the
K\"ocher-Maass series.  It would be interesting to have a deeper
understanding of how these series $D_{F,G}(s)$ fit into the general
framework of automorphic forms and their associated $L$-functions.

In Section~\ref{sec:background} we summarize the theoretical framework
for our experiments.  In Section~\ref{sec:computation} we develop the
necessary tools for our computations and discuss computational issues.
In particular, in order to obtain our numerical results we have to
develop methods to compute Poincar\'e series and Petersson scalar
products of Jacobi forms. These methods are of independent interest;
they are described in detail in
Sections~\ref{sec:Petersson-scalar-products}
to~\ref{sec:Weil-representation}. In Section~\ref{sec:proof} we
explain the proof of the main theorem. Finally, in
section~\ref{sec:numerical-results} we summarize our numerical
results.

\section{Background and Notation}
\label{sec:background}

The Siegel upper half space of degree~$d$ is given by the set
\begin{equation*}
  \PH^d := \{Z=X+iY\in \C^{d \times\ d} : Z^t = Z, \text{ and $Y$
    positive definite}\}.
\end{equation*}
A Siegel modular form $F:\PH^d \to \C$ of degree~$d>1$ is a
holomorphic function invariant under the action of $\sym{Sp}(d,\Z)$ on
$\PH^d$. In this paper we specialize to degree~$2$ and so we have a
Fourier expansion of the form
\begin{equation*}
  F\mat \tau z z  {\tau'}
  =
  \sum_{\substack{r,n,m\in\Z\\r^2-4mn\leq 0\\n,m\geq 0}}
  a_F(n,r,m)q^n\zeta^rq'^m
  ,
\end{equation*}
where $q=e^{2\pi i \tau}$, $q'=e^{2\pi i \tau'}$ and $\zeta = e^{2\pi
  i z}$ ($\tau,\tau'\in \PH^1$, $z\in\C$).  Such an $F$ is a cusp form
if the $a_F(n,r,m)=0$ unless the quadratic form $[n,r,m] =
nx^2+rxy+my^2$ is strictly positive definite.  We denote by $M_k$ and
$S_k$ the space of Siegel modular forms and cusp forms of weight $k$
and degree 2, respectively.

For each integer $\ell \ge 1$ there is a Hecke operator $T(\ell)$, a
linear map that maps $S_k$ to~$S_k$ (see~\cite[p.~386]{Skoruppa2} for
explicit formulas).  We call a cusp form an {\it eigenform} if it is a
common eigenform for all Hecke operators. To eigenforms we attach the
spinor $L$-function as follows.  If $T(\ell)F=\lambda_{F,\ell}F$, then
define
\begin{equation}
  \label{eq:spinor-l-function}
  L(F,s):=\zeta(2s-2k+4)\sum_{\ell=1}^\infty \frac{\lambda_{F,\ell}}{\ell^s}
\end{equation}
This function has a meromorphic continuation to the whole complex
plane with at most a pole, which is then at $s=k$ and of order one.
It satisfies the functional equation
\begin{equation}
  \label{eq:functional}
  L^*(F,s):=(2\pi)^{-2s}\Gamma(s)\Gamma(s-k+2)L(F,s)=(-1)^kL^*(F,2k-2-s).
\end{equation}
In some of our computations we renormalize \eqref{eq:functional} by
$s\mapsto s+k-\frac32$ so that the corresponding new functional
equation expresses an invariance under $s\mapsto 1-s$ instead
of~$s\mapsto 2k-2-s$.

We use $S_{k,N}$ for the space of Jacobi cusp forms of weight $k$ and
index $N$ in the sense of~\cite[Chapter I]{Eichler-Zagier}. Any $\phi$
in $S_{k,N}$ has a Fourier development of the form
\begin{equation*}
  \phi(\tau,z) = \sum C_\phi(D,r)\,q^{\frac{r^2-D}{4N}}\,\zeta^r .
\end{equation*}
Here the sum is over all {\it $N$-admissible pairs}, i.e.~over all
pairs of integers $(D,r)$ such that $D<0$ and $D \equiv r^2 \bmod 4N$.
Recall that $C_\phi(D,r)$ depends on $r$ only modulo $2N$ and that
$\phi$ is a cusp form if $C_\phi(D,r)=0$ unless $D<0$

As mentioned above, for an integer $k$ we denote by $S_k$ the space of
Siegel cusp forms of weight~$k$ on the full Siegel modular group, and
we use $S_k^*$ and $S_k^?$ for, respectively, the subspace of Maass
cusp forms and its complement. Recall that $S_k^*$ and $S_k^?$ are
invariant under all Hecke operators. The first case of nontrivial
$S_k^?$ occurs for weight $k=20$, where $S_{20}^?$ is one-dimensional.
In Table~\ref{tbl:Siegel-dimensions} we list, for $20 \le k \le 30$,
the dimensions of various subspaces of the full space of Siegel
modular forms of degree 2 and weight $k$.

Every element $F$ in $S_k$ admits a Fourier-Jacobi development
\begin{equation*}
  F \mat \tau z z {\tau'} = \sum_{N\ge 1} F_N(\tau,z)\,e^{2\pi i N\tau'}
\end{equation*}
with elements $F_N$ of $S_{k,N}$, respectively.

If $F$ and $G$ are both elements of $S_k$ we consider the Dirichlet
series
\begin{equation}
  D_{F,G}(s) := \zeta(2s-2k+4)\sum_{N \ge 1} \frac{\psp {F_N}{G_N}}{N^s}
  ,
\end{equation}
which was introduced in~\cite{Kohnen-Skoruppa} and which may be viewed
as an analogue of the Rankin convolution for elliptic modular forms.
Here $\psp {\phi}{\psi}$, for Jacobi cusp forms $\phi$ and $\psi$ in
$S_{k,N}$, denotes their Petersson scalar product. More precisely, we
have
\begin{equation*}
  \psp {\phi}{\psi} = \int_{\mathcal D}
  \phi(\tau,z)\overline{\psi(\tau,z)} \,v^{k-3}e^{-4\pi
    Ny^2/v}du\,dv\,dx\,dy \qquad(\tau = u+iv, z = x+iy),
\end{equation*}
where ${\mathcal D}$ denotes any fundamental domain for the Jacobi
group $\SL^{J}$ acting on $\PH \times \C$.

\section{Method of computation}
\label{sec:computation}

\subsection{Petersson scalar products}
\label{sec:Petersson-scalar-products}

In order to compute numerically the coefficients of the $L$-series
$D_{F,G}$ we need a method for computing the scalar product of two
Jacobi forms of the same weight $k$ and index $N$. Since $F$ and $G$
are eigenforms, their Fourier coefficients (and hence also the Fourier
coefficients of the Jacobi forms in their Fourier-Jacobi development)
are algebraic numbers. The package~\cite{Ryan-Skoruppa} implements
methods developed in~\cite{Skoruppa2} to provide these Fourier
coefficients in an exact representation. It is thus reasonable to look
for a method to express the Petersson scalar product of two Jacobi
forms in terms of their Fourier coefficients. We propose the following
method.

For an $N$-admissible pair $a = (D,r)$ denote by $P_a$ the $a$-th
Poincar\'e series of $S_{k,N}$.  This is the unique Jacobi cusp form
which satisfies
\begin{equation*}
  \psp \phi{P_a} = C_\phi(a)
\end{equation*}
for all forms $\phi$ in $S_{k,N}$. It is clear that the $P_a$ with $a$
running through all admissible pairs span the space of cusp forms. Let
$n$ be the dimension of $S_{k,N}$, and let $h=\{h(i)\}$ ($1\le i\le
n$) be a sequence of $n$ admissible pairs.  We use $\Gamma_h$ for the
Gram matrix of the $P_{h(i)}$ with respect to the Petersson scalar
product, i.e.
\begin{equation}
  \label{eq:Gram-matrix}
  \Gamma_h = \left(C_{P_{h(i)}}(h(j))\right).
\end{equation}
Note that the determinant of $\Gamma_h$ is different from 0 if and
only if the $P_{h(i)}$ form a basis of $S_{k,N}$.  For a form $\phi$
in $S_{k,N}$, let
\begin{equation*}
  \gamma_h(\phi) = \left(C_\phi(h(j)),\dots,C_f(h(n))\right)^t .
\end{equation*}
We then have the following formula for the scalar product of $F$ and
$G$.
\begin{Theorem}
  \label{thm:scalar-product}
  Assume that the Poincar\'e series $P_{h(i)}$ ($1\le i\le n$) form a
  basis of $S_{k,N}$.  Let $\phi$ and $\psi$ be Jacobi forms in
  $S_{k,N}$.  Then one has
  \begin{equation*}
    \psp {\phi}{\psi}
    =
    \gamma_h(\phi)^t\Gamma_h^{-1}\,\overline {\gamma_h(\psi)} .
  \end{equation*}
\end{Theorem}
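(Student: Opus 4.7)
The plan is to reduce everything to the defining property of the Poincar\'e series, namely $\psp{\phi}{P_a}=C_\phi(a)$ for every cusp form $\phi\in S_{k,N}$, together with elementary linear algebra in the basis $\{P_{h(i)}\}_{i=1}^n$.

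First I would expand the two input forms in the given basis, writing $\phi=\sum_i \alpha_i P_{h(i)}$ and $\psi=\sum_j \beta_j P_{h(j)}$; this is possible precisely because of the nondegeneracy assumption on $\Gamma_h$, which is equivalent to the $P_{h(i)}$ being a basis. Applying the defining property of $P_{h(j)}$ to $\phi$ yields
\begin{equation*}
  C_\phi(h(j)) = \psp{\phi}{P_{h(j)}} = \sum_i \alpha_i\,\psp{P_{h(i)}}{P_{h(j)}} = \sum_i \alpha_i\,C_{P_{h(i)}}(h(j)),
\end{equation*}
so that in matrix form $\gamma_h(\phi)=\Gamma_h^{\,t}\alpha$, and analogously $\gamma_h(\psi)=\Gamma_h^{\,t}\beta$. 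This is the key identification: the coordinate vector of $\phi$ in the Poincar\'e basis is recovered from its Fourier data via $\Gamma_h$.

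Next I would compute the Petersson scalar product directly from the two expansions, obtaining
\begin{equation*}
  \psp{\phi}{\psi} = \sum_{i,j} \alpha_i\,\overline{\beta_j}\,\psp{P_{h(i)}}{P_{h(j)}} = \alpha^t\,\Gamma_h\,\overline{\beta}.
\end{equation*}
I would then invoke the Hermitian symmetry of the Petersson product, which gives $\overline{\Gamma_h^{\,t}}=\Gamma_h$, so that $\overline{\gamma_h(\psi)}=\Gamma_h\,\overline{\beta}$, whence $\overline{\beta}=\Gamma_h^{-1}\overline{\gamma_h(\psi)}$; on the other hand $\alpha^t=\gamma_h(\phi)^t\,\Gamma_h^{-1}$ by transposing the first displayed identity. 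Substituting these two expressions produces
\begin{equation*}
  \psp{\phi}{\psi} = \gamma_h(\phi)^t\,\Gamma_h^{-1}\,\Gamma_h\,\Gamma_h^{-1}\,\overline{\gamma_h(\psi)} = \gamma_h(\phi)^t\,\Gamma_h^{-1}\,\overline{\gamma_h(\psi)},
\end{equation*}
which is the claimed formula.

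There is no real obstacle; the only thing to watch is the bookkeeping of transpose versus conjugate-transpose, which is why the Hermitian symmetry of $\Gamma_h$ must be used explicitly rather than tacitly. The result is essentially a coordinate-change calculation: $\Gamma_h^{-1}$ plays the role of the Gram matrix of the dual basis to $\{P_{h(i)}\}$ relative to the pairing, and the theorem simply says that the Fourier-coefficient functionals $\phi\mapsto C_\phi(h(j))$ are, by definition of Poincar\'e series, dual to the $P_{h(i)}$ up to this invertible change of basis.
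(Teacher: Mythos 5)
Your proposal is correct and follows essentially the same route as the paper: express $\phi$ and $\psi$ in the Poincar\'e basis, use the reproducing property $\psp{\phi}{P_{h(j)}}=C_\phi(h(j))$ to identify the coordinate vectors via $\Gamma_h$ (up to transpose/conjugation, handled by the Hermitian symmetry of the Gram matrix), and substitute into the bilinear expansion of the scalar product. The only difference is cosmetic: the paper writes the key identity as $\overline{\Gamma_h}\,x=\gamma_h(\phi)$ and then invokes $\overline{\Gamma_h}=\Gamma_h^t$, whereas you write $\Gamma_h^t\alpha=\gamma_h(\phi)$ directly and invoke Hermitian symmetry later.
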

\begin{proof}
  Indeed, if $x$ and $y$ are the coordinate vectors of $\phi$
  respectively $\psi$ with respect to the basis $P_{h(i)}$ we have
  $\psp \phi \psi = x^t \Gamma_h \overline y$.  On the other hand, we
  deduce from $\psp \phi {P_{h(i)}} = C_{\phi}(h(i))$ the identity $
  \overline{\Gamma_h}x = \gamma_h(\phi)$ and similarly
  $\overline{\Gamma_h}y = \gamma_h(\psi)$. Using $\overline{\Gamma_h}
  = \Gamma_h^t$ the lemma is now immediate.
\end{proof}

\subsection{Fourier coefficients of Jacobi Poincar\'e series}

According to Theorem~\ref{thm:scalar-product} the computation of the
$N$-th coefficient in the series $D_{F,G}$ is reduced to the numerical
computation of the Fourier coefficients of the Jacobi Poincar\'e
series $P_{\Delta,r}$ in $S_{k,N}$ where $\Delta<0$ and
$r^{2}\equiv\Delta\mod4N$. Explicit formulas for these coefficients
were originally worked out in~\cite[Prop.~in
Sect.~2]{Gross-Kohnen-Zagier}.  However, we shall use the formulas
from~\cite[Thm.~1.4]{Bruinier}, which are stated in terms of vector
valued modular forms instead of Jacobi forms.  The formulas given
in~\cite[Thm.~1.4]{Bruinier} are more useful for us mainly because
they involve in an explicit manner the Weil representation $\rho$
associated to the lattice $L:=\left(\mathbb{Z},(x,y)\mapsto
  2Nxy\right)$.  Explicit formulas for the matrix coefficients of this
representation have been given in~\cite{Skoruppa1} and will be given
for arbitrary lattices and in a slightly more explicit form
in~\cite{Stroemberg}. This makes it possible to evaluate the
coefficients of the Poincar\'e series without having to deal with too
many iterated sums\footnote{ A generalization and implementation of
  this method for arbitrary Weil representations will be published
  elsewhere~\cite{Stroemberg}.}  (more precisely, Gauss sums inside
Kloosterman sums) as provided by the formulas
in~\cite{Gross-Kohnen-Zagier}.

To be more specific, for our computations we shall use the following
formulas:
\begin{Theorem}
  \label{thm:Poincar-series}
  We have
  \begin{equation*}
    P_{\Delta,r}\left(z,\tau\right)
    =
    \alpha_{N,k,\Delta,r}
    \sum_{\begin{subarray}{c}
        r',\Delta'\in\Z, \Delta <0\\
        r'^{2} \equiv \Delta' \bmod4N
      \end{subarray}} p_{\Delta,r}\left(\Delta',r'\right)\,
    q^{\frac{r'^{2}-\Delta}{4N}}\zeta^{r'} ,
  \end{equation*}
  where the Fourier coefficients are given by
  \begin{eqnarray*}
    p_{\Delta,r}\left(\Delta',r'\right)
    & = &
    \delta_{\Delta,\Delta'}\left(\delta_{r,r'}+\delta_{-r,r'}\right)\nonumber \\
    & &
    +2\pi\left|\frac{\Delta'}
      {\Delta}\right|^{\frac{k}{2}-\frac{3}{4}}
    \sum_{c\in\Z, c \ne 0}
    H_{c}^{*}\left(r,\Delta,r',\Delta'\right)
    J_{k-\frac{3}{2}}\left(\frac{\pi}{cN}\sqrt{\left|\Delta\Delta'\right|}\right).
    \label{eq:p_D_r_Dp_rp}
  \end{eqnarray*}
  Here $J_{v}\left(x\right)$ is the standard J-Bessel function and
  \begin{equation*}
    H_{c}^{*}\left(r,\Delta,r',\Delta'\right)
    =
    \frac{e^{-\pi i\sym{sgn}(c) \frac{\left(2k-1\right)}{4}}}{|c|}
    \sum_{\begin{subarray}{c}
        d \bmod c\\
        \sym{gcd}(c,d)=1
      \end{subarray}
    }
    \rho
    \left(\mat abcd^*\right)_{\underline r, \underline r'}
    e\left(-\frac{\Delta
        a+\Delta'd}{4Nc}\right)
    ,
  \end{equation*}
  where $\rho$ is the Weil representation of $\SL$ associated to the
  lattice $\big(\Z, (x,y)\mapsto 2Nxy\big)$, and where, for an integer
  $r$, we use $\underline r$ for the element $\frac r{2N} +\Z$ in
  $\frac1{2N}\Z/\Z$ (see Section~\ref{sec:Weil-representation} for an
  explanation of these terms, and see
  Theorem~\ref{thm:Weil-representation} for an explicit expression for
  the matrix elements of $\rho$), and where, for each~$d$ relatively
  prime to $c$, we choose an element $\mat abcd$ in $\SL$.  Finally,
  one has
  \begin{equation*}
    \alpha_{N,k,\Delta,r}
    =
    \frac
    {(\pi|\Delta|/N)^{k-3/2}}
    {8\sqrt N \Gamma(k-3/2)}
    .
  \end{equation*}
\end{Theorem}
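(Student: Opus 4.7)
The approach is to deduce the theorem directly from Bruinier's formula [Thm.~1.4] via the classical Eichler--Zagier isomorphism between Jacobi forms of weight $k$ and index $N$ and vector-valued holomorphic modular forms of weight $k-\frac12$ on $\PH$ transforming under the Weil representation $\rho$ attached to the lattice $L=(\Z,(x,y)\mapsto 2Nxy)$. The dictionary sends $\phi\in S_{k,N}$ with Fourier coefficients $C_\phi(D,r)$ to the vector-valued form whose $\underline r$-component has $q$-expansion $\sum_{D<0} C_\phi(D,r)\, q^{-D/4N}$, and it respects the Petersson scalar products up to a universal constant. In particular, under this isomorphism the Jacobi Poincar\'e series $P_{\Delta,r}$ corresponds to a vector-valued Poincar\'e series of Bruinier type at indices $\pm\underline r$.

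First I would identify the Poincar\'e series on both sides: since Jacobi-form coefficients satisfy $C_\phi(D,r) = C_\phi(D,-r)$, the form $P_{\Delta,r}$ transports to the symmetrized vector-valued Poincar\'e series $P_{\underline r, -\Delta/4N} + P_{-\underline r, -\Delta/4N}$, and this symmetrization is precisely what produces the term $\delta_{\Delta,\Delta'}\bigl(\delta_{r,r'}+\delta_{-r,r'}\bigr)$. Next I would substitute Bruinier's explicit formula, whose non-trivial part is a sum over $c\neq 0$ of a Kloosterman-type sum built from the matrix coefficients $\rho\mat abcd_{\underline r,\underline r'}$ times the Bessel function $J_{k-3/2}\bigl(\tfrac{\pi}{|c|N}\sqrt{|\Delta\Delta'|}\bigr)$. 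Reading off $H_c^*$ from this sum, the phase $e^{-\pi i\,\sym{sgn}(c)(2k-1)/4}$ arises from the half-integral weight automorphy factor in the definition of $\rho$, and the factor $1/|c|$ comes from the standard Bruhat-cell unfolding that produces Kloosterman sums and Bessel functions in any Petersson-type Poincar\'e series computation.

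Finally I would match normalizations. Bruinier's formula carries universal constants depending on the weight $k-\tfrac12$, on $|\Delta|/4N$, and on $\Gamma(k-\tfrac32)$; when transferred back through the Eichler--Zagier isomorphism these assemble into the prefactor $\alpha_{N,k,\Delta,r}=(\pi|\Delta|/N)^{k-3/2}/(8\sqrt{N}\,\Gamma(k-\tfrac32))$, while the factor $|\Delta'/\Delta|^{k/2-3/4}$ inside the sum is the standard $(n/m)^{(k'-1)/2}$ weight-dependent ratio for weight $k'=k-\tfrac12$, combined with the identification of Bruinier's ``exponent $n$'' with $-\Delta/4N$. The main obstacle is purely bookkeeping: carefully tracking the half-integral weight multiplier system of $\rho$, the action of $\underline r\mapsto -\underline r$ on the symmetrized Poincar\'e series, and all factors of $4N$ arising from the identification $L'/L\simeq \tfrac1{2N}\Z/\Z$. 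The analytic content---unfolding the Poincar\'e series over $\Gamma_\infty\backslash\SL$ and evaluating the resulting integral as a $J$-Bessel function---is classical and presents no genuine difficulty once the dictionary between the Jacobi and vector-valued pictures has been set up precisely.
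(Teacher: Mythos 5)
Your proposal follows essentially the same route as the paper: transfer $P_{\Delta,r}$ through the Eichler--Zagier isomorphism to a vector-valued Poincar\'e series for the Weil representation of $L=(\Z,(x,y)\mapsto 2Nxy)$, use the compatibility of the Petersson products together with the reproducing property to pin down the proportionality constant, and then read off the Fourier expansion from Bruinier's Theorem~1.4. The only bookkeeping caveats are that Bruinier's single Poincar\'e series $P^L_{r/2N+\Z,\,|\Delta|/4N}$ is already symmetric under $\underline r\mapsto-\underline r$ (so its expansion contains $\delta_{r,r'}+\delta_{-r,r'}$ without forming the sum of two series), and that the constant $1/(4\sqrt N)$ in the scalar-product comparison requires correcting the factor stated in Eichler--Zagier's Theorem~5.3, which is where the $8\sqrt N$ in $\alpha_{N,k,\Delta,r}$ comes from.
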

\begin{Remark}
  Note that the Fourier coefficients of the Poincar\'e series are real
  as follows, e.g., from~\cite[Lemma 1.13 and 1.15]{Bruinier}.
\end{Remark}
\begin{proof}[Proof of Theorem~\ref{thm:Poincar-series}]
  According to~\cite[Theorem~5.1]{Eichler-Zagier} we have a one to one
  correspondence between Jacobi forms and vector-valued modular
  forms. Namely, if $\phi$ is a Jacobi form in $J_{k,N}$ with Fourier
  coefficients $C_\phi(\Delta,r)$, then the application
  \begin{equation*}
    \phi \mapsto H(\phi)
    :=
    \sum_{\rho\bmod N} C_\phi(\Delta, \rho)\,e_{\rho/2N + \Z}
  \end{equation*}
  defines a bijection $H:J_{k,N} \rightarrow M_{k-\frac12,L}$.  Here,
  for a residue class $x$ in $\frac1{2N}\Z/\Z$, we use $e_{x}$ for the
  element in the group ring $\C[\frac1{2N}\Z/\Z]$ which is $1$ at $x$
  and $0$ otherwise. Moreover, following the notations
  in~\cite[Def.~1.2, p.18]{Bruinier}, we use $M_{k-\frac12,L}$ for the
  space of vector valued modular forms $f:\PH \rightarrow
  \C[\frac1{2N}\Z/\Z]$ which satisfy
  \begin{equation*}
    f(A\tau) = w(\tau)^{2k-1}\rho^*(A,w)f(\tau)
  \end{equation*}
  for all $(A,w) \in \Mp$ with $\Mp$ denoting the usual metaplectic
  cover of $\SL$ (see \cite[sec.~1.1, p.15]{Bruinier}). The
  representation $\rho$ is the Weil representation associated to the
  lattice $L=\left(\mathbb{Z},(x,y)\mapsto 2Nxy\right)$, i.e.~the
  representation of $\Mp$ on $\C[\frac1{2N}\Z/\Z]$, which in terms of
  the standard generators for $\Mp$ is given by
  \begin{equation}
    \label{eq:Weil-representation}
    \begin{split}
      \rho\big(\mat 1101,1\big)\,e_{r/2N+\Z} &= e^{2\pi i r^2/4N}\,e_{r/2N+\Z}\\
      \rho\big(\mat 0{-1}10,\sqrt\tau\big)\,e_{r/2N+\Z} &=
      \frac{e^{-2\pi i/8}}{\sqrt{2N}}\sum_{r'\bmod 2N}\,e^{-2\pi i
        rr'/2N}\,e_{r'/2N+\Z}
    \end{split}
  \end{equation}
  Finally, $\rho^*$ is the representation obtained from $\rho$ by
  taking the complex conjugates of the $\rho(A,w)$ (where we view
  $\rho(A,w)$ as a matrix).

  By~\cite[Thm.~5.3]{Eichler-Zagier} we have
  \begin{equation}
    \label{eq:psp-compatibility}
    \psp {\phi}{\psi} = \frac 1{4\sqrt{N}} \psp {H(\phi)}{H(\psi)}, 
  \end{equation}
  where the scalar product of two vector valued modular forms $f_1$
  and $f_2$ in $M_{k-\frac12,L}$ is defined as~\cite[eq.~(1.17),
  p.~22]{Bruinier}:
  \begin{equation*}
    \psp{f_1}{f_2}
    =
    \sum_{r \in \frac1{2N}\Z/\Z}
    \int_{\SL\backslash\PH} f_{1,j}(\tau)\overline{f_{2,j}(\tau)}v^{k-\frac52}dudv
    \quad
    (f_j =\sum_{r \in \frac1{2N}\Z/\Z} f_{j,r} e_r)
    .
  \end{equation*}
  (Note that~\cite[Thm.~5.3]{Eichler-Zagier} contains an error: the
  factor $1/{\sqrt{2m}}$ in Thm.~5.3 has to be replaced by
  $1/{\sqrt{4m}}$ as the proof in~\cite{Eichler-Zagier} actually
  shows.)  In addition, one has to use that a fundamental domain for
  $\SL^J\backslash \PH\times\C$ is given by $\{(\tau,x\tau+y):\tau\in
  D, x,y\ge 0, x+y\le 1\}$. Here $D$ is a fundamental domain for
  $\SL\backslash \PH$.  Note the condition $x+y\le 1$, which is due to
  the action of the scalar matrix $-1$ in~$\SL$, which gives
  $(-1)(\tau,z) = (\tau,-z)$. This gives another factor of $\frac12$,
  so that the correct factor in Thm.~5.3 becomes in fact
  $1/{4\sqrt{m}}$ (instead of $1/{\sqrt{2m}}$).

  From~\eqref{eq:psp-compatibility} and~\cite[Prop.~1.5,
  p.23]{Bruinier} it follows that
  \begin{equation*}
    H(P_{\Delta,r})
    =
    \alpha_{N,k,\Delta,r} P^L_{r/2N + \Z, |\Delta|/4N}
    ,
  \end{equation*}
  where $P^L_{r/2N + \Z, |\Delta|/4N}$ is the Poincar\'e series as
  introduced in~\cite[Eq.~(1.11), p.19]{Bruinier}.  The Fourier
  expansion of $P^L_{r/2N + \Z, |\Delta|/4N}$ has been computed
  in~\cite[Thm.~1.4, p.19]{Bruinier}.  The theorem is now an immediate
  consequence of the formula loc.~cit.
\end{proof}

\subsection{The Weil representation}
\label{sec:Weil-representation}

The last requirement to make our formulas for the Petersson scalar
products of Jacobi forms explicit and ready for implementation is the
determination of the matrix elements of the Weil representation $\rho$
which is defined by the formulas~\eqref{eq:Weil-representation}.
Explicit formulas are given e.g.~in~\cite[Prop.~1.6]{Shintani}.
However, a direct implementation of these formulas would involve, for
$A=\mat abcd$ with $c\not=0$, a sum of length $|c|$. A trivial
estimate shows that with this formula the number of exponential
evaluations necessary to compute $p_{\Delta,r}\left(\Delta',r'\right)$
is of the order of magnitude $M^{3}$ if we truncate the outer
sum~\eqref{eq:p_D_r_Dp_rp} at~$\left|c\right|=M$.  The formula of
Theorem~\ref{thm:Weil-representation} below which we will use is
computationally less expensive since it requires only one exponential
evaluation instead of a sum of $|c|$ such evaluations.

For stating our formula we need some notations. To each matrix $A =
\mat abcd$ in $\SL$ we associate an element $A^*$ in $\Mp$ by setting
$A^* := (A,\sqrt{c\tau+d})$. (Note that $A\mapsto A^*$ does not define
a homomorphism of groups.) Here and in the sequel, for a complex
number $w$, we use $\sqrt w$ for its root which is situated in the
right half plane but not on the negative imaginary axes. We write
$e^n(w)$ and $e_n(w)$ for $e^{2\pi i nw}$ and $e^{2\pi i w/n}$,
respectively.  For $h$, $h'$ in $\frac1{2N}\Z/\Z$ we define the matrix
element $\rho(A^*)_{h,h'}$ by the equation
\begin{equation*}
  \rho(A^*)\,e_{h'} = \sum_{h\in\frac 1{2N}\Z/\Z} \rho(A^*)_{h,h'}\,e_{h}
  .
\end{equation*}
Finally, for a nonzero rational number $r = \frac ba$
($\sym{gcd}(a,b)=1$, $a>0$) we define $\sym{signature}(r)$ as the
integer modulo 8 such that
\begin{equation*}
  e_8(\sym{signature}(r))
  =
  \left(\frac ba \right) e_8(1-a_1 + \sym{even}(a)\,ba_1)
  ,
\end{equation*}
where $\sym{even}(a)$ equals 1 or 0 accordingly as $a$ is even or odd,
where we write $a=2^na_1$ with odd $a_1$, and where $\left(\frac ba
\right)$ denotes the generalized Legendre symbol (which is
multiplicative in $a$ and in $b$, and which satisfies $\left(\frac b2
\right)=1$ if $b\equiv \pm 1\bmod 8$ and $\left(\frac b2 \right)=-1$
if $b\equiv \pm 3\bmod 8$.
\begin{Lemma}
  Let $r$ be a nonzero rational number, $a$ its (positive)
  denominator, and suppose that $a$ is not exactly divisible by $2$.
  Then
  \begin{equation*}
    \frac 1{\sqrt{\tilde a}}
    \sum_{x \bmod \tilde a} e(rx^2)
    = 
    e_8\big(\sym{signature}(2r)\big)
    ,
  \end{equation*}
  where $\tilde a = a/\sym{gcd}(2,a)$.
\end{Lemma}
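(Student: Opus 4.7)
The plan is to split on the $2$-adic valuation of $a$ (the hypothesis leaves only $a$ odd or $4\mid a$) and, in each case, reduce the Gauss sum to a classical evaluation, then match the resulting eighth root of unity against the definition of $\sym{signature}(2r)$.

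\emph{Case $a$ odd.} Here $\tilde a = a$, and the classical evaluation of the quadratic Gauss sum gives $\sum_{x\bmod a} e(bx^2/a) = \leg b a\,\varepsilon_a\,\sqrt a$, where $\varepsilon_a \in \{1,i\}$ depending on $a\bmod 4$. I would then use the one-line supplementary identity $\varepsilon_a = \leg 2 a\,e_8(1-a)$, checked by running through the four residue classes of $a$ modulo~$8$, to rewrite $\leg b a\,\varepsilon_a$ as $\leg{2b}{a}\,e_8(1-a)$. Since $2r = 2b/a$ has denominator $a$ with $a_1 = a$ and $\sym{even}(a) = 0$, this is exactly $e_8(\sym{signature}(2r))$ by definition.

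\emph{Case $4\mid a$.} Write $a = 2^N a_1$ with $N \ge 2$ and $a_1$ odd, so $\tilde a = 2^{N-1}a_1$; since $\sym{gcd}(b,a)=1$, both $b$ and $a_1$ are odd. A short check shows that $x\mapsto e(rx^2)$ already has period $\tilde a$ because $ba/4\in\Z$. Parametrizing $x\bmod\tilde a$ via the Chinese remainder theorem along the coprime factorization $\tilde a = 2^{N-1}\cdot a_1$ kills the cross term in $x^2$ (it contributes an integer to the exponent), and the sum factors as
\begin{equation*}
\sum_{x\bmod\tilde a} e(rx^2) = \Bigl(\sum_{u\bmod 2^{N-1}} e\bigl(ba_1 u^2/2^N\bigr)\Bigr)\cdot\Bigl(\sum_{v\bmod a_1} e\bigl(b\,2^{N-2} v^2/a_1\bigr)\Bigr).
\end{equation*}
The second factor is an odd-modulus Gauss sum handled by Case~1. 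For the first I would invoke the classical Hecke-type formula $\sum_{u=0}^{2^N-1} e(\alpha u^2/2^N) = \sqrt{2^N}\,(1+i)\,\leg{2^N}{\alpha}\,\varepsilon_\alpha^{-1}$ for odd $\alpha$ and divide by the period-doubling factor~$2$, obtaining $\sqrt{2^{N-1}}\,e_8(1)\,\leg{2}{ba_1}^N\,\varepsilon_{ba_1}^{-1}$ for the first factor.

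Multiplying the two factors and simplifying via multiplicativity $\leg b{a/2} = \leg b 2^{N-1}\leg b{a_1}$ (even powers of $\leg 2{a_1}$ drop out) should produce $\sqrt{\tilde a}\,\leg b{a/2}\,e_8(1 - a_1 + ba_1)$, which by the definition of signature, with $a' := a/2$, $a'_1 = a_1$ and $\sym{even}(a') = 1$, equals $\sqrt{\tilde a}\,e_8(\sym{signature}(2r))$. The main obstacle is this final consolidation: reconciling the ratio $\varepsilon_{ba_1}^{-1}\varepsilon_{a_1}$, the factor $e_8(1)$, and the various Jacobi-symbol factors $\leg 2 b$ and $\leg 2{a_1}$ with the ad-hoc exponent $1-a_1+ba_1$. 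This is a finite verification in $(b\bmod 8, a_1\bmod 8)$, but it is where all the subtleties of the quadratic reciprocity package are concentrated.
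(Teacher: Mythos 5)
Your proposal is correct and follows essentially the same route as the paper's own (very terse) proof: both reduce to the classical evaluations of quadratic Gauss sums for odd modulus and for modulus $2^n$ ($n\ge 2$), combined via the Chinese-remainder substitution $x = 2^{N-1}v + a_1 u$, and both leave the final matching with the ad hoc exponent $1-a_1+\sym{even}(a)ba_1$ as a finite check modulo $8$. Your intermediate identities (the period-doubling factor of $2$, the supplementary relation $\varepsilon_a = \leg 2a e_8(1-a)$, and the target $\leg b{a/2}e_8(1-a_1+ba_1)$) all check out, so the remaining finite verification does close.
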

\begin{proof}
  This identity can be easily verified using the well-known formulas
  \begin{equation*}
    \sum_{x\bmod a}e_a\big(bx^2\big) =
    \begin{cases}
      \leg ba \sqrt{a}
      & \text{ if } a \equiv 1 \bmod 4 \\
      \leg ba i \sqrt{a}
      & \text{ if } a \equiv 3 \bmod 4\\
      \leg b{2a} e^{2\pi i b/8} \sqrt{2a}
      & \text{ if  $a=2^n$, $n\geq 2$}.\\
    \end{cases}
  \end{equation*}
  for standard Gauss sums~\cite{Gauss}, and substituting $x=2^nt+a_1u$
  in a Gauss sum for a general $a=2^na_1$ (with odd $a_1$), where $t$
  and $u$ run through a complete set of representatives modulo $a_1$
  and $2^n$, respectively.
\end{proof}

\begin{Theorem}
  \label{thm:Weil-representation}
  For $A = \mat abcd$ in $\SL$ and integers $x$, $x'$, the matrix
  element~$\rho(A^*)_{h,h'}$ ($h=x/2N+\Z$, $h'=x'/2N+\Z$) of the
  representation $\rho$ of $\Mp$ defined
  by~\eqref{eq:Weil-representation} is given by the following formula:
  \begin{equation*}
    \rho(A^*)_{h, h'}
    =
    \chi(A)\sqrt{\frac{\sym{gcd}(2N,c)}{2N}}\,
    e_{4N}\left(abx^2+2bcxy+cdy^2 + 2z_c(bx+dy) + abz_c^2\right)
  \end{equation*}
  if there exists a $y$ such that $x' \equiv ax+cy+z_c \bmod 2N$ (and
  then the right hand of this formula does not depend on the choice of
  $y$), and $\rho(A^*)_{h, h'}=0$ otherwise.  Here $z_c=N$ if $c/2N$
  is a 2-adic unit, and $z_c=0$ otherwise.  Finally,
  \begin{equation*}
    \chi(A)
    =
    e_8\left(- \sym{sign}(c)\right)\cdot
    \begin{cases}
      e_8\big(\sym{signature}(aN/2c)\big) &\text{if $c/2N$ is a 2-adic unit}\\
      e_8\big(\sym{signature}(2aN/c)\big) &\text{otherwise}.
    \end{cases}
  \end{equation*}
  if $c\not=0$ and $\chi(A)=1/\sqrt{\sym{sign}(d)}$ otherwise.
\end{Theorem}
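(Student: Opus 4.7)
The plan is to reduce to the classical formula of Shintani cited in the excerpt, which expresses $\rho(A^*)_{h,h'}$ for $c\not=0$ as a sum of length $|c|$ with an explicit prefactor, and then to evaluate that inner Gauss sum in closed form by means of the preceding lemma.

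First I would dispose of the case $c=0$. Then $A=\pm T^b$ with $T=\mat 1101$. The first line of \eqref{eq:Weil-representation} yields $\rho((T^b)^*)_{h,h'}=\delta_{h,h'}\,e_{4N}(bx^2)$, and the case $A=-T^b$ follows by multiplying by the metaplectic lift of $-I$, which is computed by squaring the second line of \eqref{eq:Weil-representation}. This reduces the $c=0$ statement to checking the formula for $\chi(\pm I)$; the factor $1/\sqrt{\sym{sign}(d)}$ in the definition of $\chi(A)$ corresponds to the chosen branch of $\sqrt{c\tau+d}$ when $d<0$.

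For $c\not=0$, Shintani's formula expresses $\rho(A^*)_{h,h'}$, up to the eighth root $e_8(-\sym{sign}(c))$ and a factor of the shape $1/\sqrt{|c|}$, as a Gauss-type sum over a single variable $y$ whose exponent is a quadratic polynomial in $x,x',y$, subject to a linear congruence modulo~$2N$. Parametrising this congruence as $x'\equiv ax+cy+z_c\bmod 2N$ accounts for the support condition in the statement; the offset $z_c$ arises precisely because the image of $c\Z$ in $\Z/2N\Z$ has index $\sym{gcd}(2N,c)$, with the coset representative $y=N$ forced when $c/2N$ is a $2$-adic unit. After substituting this parametrisation and completing the square in the remaining summation variable $t$, the relation $ad-bc=1$ collects the polynomial
\begin{equation*}
  abx^{2}+2bcxy+cdy^{2}+2z_c(bx+dy)+abz_c^{2}
\end{equation*}
of the statement into the prefactor. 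The residual sum takes the form $\sum_{t\bmod\tilde a}e(rt^2)$ with $\tilde a$ and $r$ as in the lemma, and evaluating it produces both the remaining $e_8(\sym{signature}(\cdot))$-factor of $\chi(A)$ and the square-root factor $\sqrt{\sym{gcd}(2N,c)/2N}$.

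The main obstacle is not the underlying algebra but the careful $2$-adic bookkeeping: the conductor $\tilde a$ of the Gauss sum, the offset $z_c$, and the case split in the definition of $\chi(A)$ all depend on the $2$-adic valuation of $c$ relative to $2N$. I would therefore treat the two subcases ``$c/2N$ is a $2$-adic unit'' and ``$c/2N$ is not a $2$-adic unit'' separately, using the classical Gauss sum evaluations quoted in the proof of the lemma to verify that the sign and denominator conventions match in each case. Once these two cases are isolated, the remaining computation is direct.
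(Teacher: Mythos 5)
Your proposal is correct in outline but takes a genuinely different route from the paper. The paper does not re-derive the formula at all: it quotes the explicit expressions (26), (26)$'$ and (27) from \cite{Skoruppa1} for the right action of $\Mp$ on the theta space $Th_N$, transports them to $\C[\frac 1{2N}\Z/\Z]$ by identifying that space with the dual of $Th_N$, corrects a typo in the $c=0$ case, and then invokes the Lemma \emph{only} to rewrite the quantity $\chi(A)$ -- which in \cite{Skoruppa1} appears as a normalized Gauss sum $\sum_\nu e_{4c}(aN\nu^2)$ or $\sum_\nu e_c(aN\nu^2)$ -- in the $e_8(\sym{signature}(\cdot))$ form of the statement. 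You instead propose a self-contained derivation starting from Shintani's length-$|c|$ formula, parametrising the support congruence, completing the square, and evaluating the residual quadratic Gauss sum with the Lemma; in effect you are reproving the content of \cite{Skoruppa1}'s formulas rather than citing them. Your route buys independence from \cite{Skoruppa1} at the cost of exactly the $2$-adic bookkeeping you identify: you would still need to verify that the exponent is invariant under the allowed changes $y\mapsto y+2N/\gcd(2N,c)$ (which is what makes the support condition well posed), and to justify the precise origin of the offset $z_c$, which is a $2$-adic parity phenomenon rather than merely a statement about the index of $c\Z$ in $\Z/2N\Z$. The paper's route is shorter but leans entirely on the prior reference; both approaches converge on the same Lemma for the final Gauss-sum evaluation.
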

\begin{proof}
  The claimed formula is proved in~\cite[Sec.~1.4]{Skoruppa1} (see
  formulas~(26), (26)' and (27) on p.~37 loc.cit.), and in a slightly
  different form in~\cite[Prop.~4.1]{Skoruppa-Zagier2}. Note that the
  formulas in~\cite{Skoruppa1} refer to a right action
  $(\vartheta,\alpha)\mapsto \vartheta|_{\frac12,N}\alpha$ of $\Mp$ on
  a certain space $Th_N$ spanned by theta functions
  $\vartheta_{N,\rho}$ ($\rho \bmod 2N$). The precise definitions of
  these objects is not important here. For applying the formulas
  in~\cite{Skoruppa1} one has to identify $\C[\frac1{2N}\Z/\Z]$ with
  the dual of the space $Th_N$ via $e_{h}(\vartheta_{N,\rho})=1$ if
  $h=\rho/2N + \Z$ and $e_{h}(\vartheta_{N,\rho})=0$ otherwise. It is
  then easily verified (using~\cite[Satz~0.1, p.~10]{Skoruppa1}) that
  $\big(\rho(\alpha)\,e_h\big)(\vartheta) =
  e_h\big(\vartheta|_{\frac12,N}\alpha\big)$ for all $h$ in
  $\frac1{2N}\Z/\Z$ and all $\vartheta$ in $Th_M$. Inserting for
  $\vartheta|_{\frac12,N}\alpha$ the formulas (26), (26)' and (27)
  of~\cite[Sec.~1.4]{Skoruppa1} proves the claimed formulas apart from
  the fact that the quantity $\chi(A)$ is given in~\cite{Skoruppa1} in
  the form
  \begin{equation*}
    \chi(A)
    =
    \begin{cases}
      1/\sqrt{\sym{sign}(d)}
      &c=0\\
      \frac 1{\sqrt{\sym{gcd}(2N,c)ci}}\sum_{\nu\bmod
        2c}e_{4c}(aN\nu^2)
      &\text{if $c/2N$ is a 2-adic unit}\\
      \frac1{\sqrt{\sym{gcd}(2N,c)ci}}\sum_{\nu\bmod c}e_c(aN\nu^2)
      &\text{otherwise}.
    \end{cases}
  \end{equation*}
  (Note that formula (27) contains a typo stating that $\chi(a)=1$ for
  $c=0$, which, however, can be easily corrected by noticing that
  $\vartheta_{N,\rho}|_{\frac12,N}\mat {-1}00{-1} =
  i^{-1}\vartheta_{N,-\rho}$.)  By the Lemma we can rewrite $\chi(A)$
  in the given form. This proves the theorem.
\end{proof}

\subsection{Implementation}

All the computations were done using Sage~\cite{Sage}.  The routines
for interval arithmetic are implemented in Sage via the package {\tt
  mpmath}.  The code for computing the Weil representation, the
Poincar\'e series and finally the files containing the necessary Gram
matrices $\Gamma_h$ can be found on~\cite{Data}.  The numerical
computation of the Dirichlet series $D_{F,G}(s)$, which needs in
addition to the Gram matrices the Fourier coefficients of the
eigenforms in question is done via a short Sage script (also to be
found loc.cit.), which in turn calls the Siegel modular forms
package~\cite{Ryan-Skoruppa} mentioned in
Section~\ref{sec:introduction}.

\subsection{Discussion of error estimates}
\label{error-estimates}
For the computation of the Gram matrix~$\Gamma_h$ of
Theorem~\ref{thm:scalar-product} we need to compute the Fourier
coefficients $p_{\Delta,r}\left(\Delta',r'\right)$ of
Theorem~\ref{thm:Poincar-series}.  Using the standard asymptotics of
the J-Bessel function
$J_{v}\left(x\right)\sim\frac{1}{\Gamma\left(v+1\right)}$
$\left(\frac{x}{2}\right)^{v+1}$ for~$x\ll1$ and the fact that
$|\rho\left(M\right)_{h,h'}|\le 1$ it follows that in order to obtain
an error of order $\varepsilon=10^{-T}$ for the computation of a
coefficient we need to truncate the sum over $c$ at about
$c=M\left(T\right)$ with
\begin{equation*}
  M\left(T\right)>O\left(1\right)10^{\frac{T}{k-\frac{3}{2}}}
\end{equation*}
with a constant (which can be made explicit) depending on
$k,\Delta,\Delta'$ and $N$.  From this we see that for small values of
$k$, in particular, this evaluation can be very time consuming while
for higher weight $k$ we can truncate the sum earlier. At this point
one should also mention that the dimension of the space $S_{k,N}$,
i.e. the number of necessary Poincar\'e series to consider, increases
linearly in $k$ and in $N$ (see~\cite[Thm.~1]{Skoruppa-Zagier} for the
dimension formula), so that the number of coefficients
$p_{\Delta,r}\left(\Delta',r'\right)$ to compute grows quadratically
in $k$ and in $N$.  In Table~\ref{tbl:Jacobi-dimensions} we list the
dimensions of the spaces of Jacobi cusp forms $S_{k,N}$ which fall
within the range of our computations.

We need to make sure that the set of Poincar\'e series $\left\{
  P_{h\left(i\right)}\right\} _{i=1,\ldots,n}$ for a given sequence of
indices $h=\left\{ \left(\Delta_{i},r_{i}\right)\right\} _{i} (n=\dim
S_{k,N})$ form a basis of $S_{k,N}$.  For this it suffices to verify
that the associated Gram matrix $\Gamma_{h}$ \eqref{eq:Gram-matrix} is
invertible, i.e. that its determinant is non-zero. Let
$\varepsilon=10^{-T}$ ($T\gg 1 $) and suppose that we use a
sufficiently high working precision. Let
$N\left(\Gamma_{h}\right)=\left(a_{ij}\right)$ be a numerical
approximation to $\Gamma_{h}$ with an error less than
$\varepsilon$. By this we mean that
$a_{ij}=N(C_{P_{h\left(i\right)}}\left(h\left(j\right)\right))$ and
$\max_{0\le i,j\le n}
\left|a_{ij}-C_{P_{h\left(i\right)}}\left(h\left(j\right)\right)\right|
< \varepsilon$.  It follows that
$-\varepsilon<a_{ij}-C_{P_{h\left(i\right)}}\left(h\left(j\right)\right)<\varepsilon$,
or equivalently
$C_{P_{h\left(i\right)}}\left(h\left(j\right)\right)\in
\left(a_{ij}-\varepsilon,a_{ij}+\varepsilon\right)=:I_{ij}$ for each
$i,j$.  Let $I\left(\Gamma_{h}\right)$ be the interval matrix with
intervals $I_{ij}$ as entries.  The matrix $I\left(\Gamma_{h}\right)$
is then also an approximation to $\Gamma_{h}$ in the sense that
$\Gamma_{h}\in I\left(\Gamma_{h}\right)$.  By elementary properties of
interval arithmetic (see e.g.~\cite{Alefeld-Herzberger}) one can see
that $\det\Gamma_{h}\in\det I\left(\Gamma_{h}\right)$ and it is
therefore enough to verify that $0\notin\det
I\left(\Gamma_{h}\right)$.  In practice, we start with one function
$P_{h\left(1\right)}$ and the corresponding one-by-one gram matrix,
$\Gamma_{h}^{\left(1\right)}$. We then choose a second function,
$P_{h\left(2\right)}$, and calculate the determinant (as an interval)
of the interval approximation to the partial Gram matrix of these two
functions. If this interval does not contain zero we add the function
$P_{h\left(2\right)}$ to the set and have a non-singular two-by-two
matrix $\Gamma_{h}^{\left(2\right)}$. This procedure then continues
until we have a set of $n$ linearly independent functions and a
non-singular matrix $\Gamma_{h}$.

It is not difficult to see that if the absolute error in (the infinity
norm of) of our approximation $N(\Gamma_{h})$ of $\Gamma_{h}$ is
$\varepsilon$ then the absolute error of our numerical approximation
$N(\psp {F_{N}}{G_{N}})$ of $\psp {F_{N}}{G_{N}}$ accordingly is also
bounded by a constant (depending on $F_N$ and $G_N$) times
$\varepsilon$.  To see this, first observe that in all our
applications the functions $F_{N}$ and $G_{N}$ are known exactly (if
defined over a number field other than $\mathbb{Q}$ we can approximate
them to arbitrarily small precision).  For any $\varphi,\psi\in
S_{k,N}$ let $\psp{\varphi}{\psi}_{N \left(\Gamma_{h}\right)}
:=\gamma_{h}\left(\phi\right)^t
N\left(\Gamma_{h}\right)\overline{\gamma_{h}\left(\psi\right)}$ denote
the inner-product with respect to the
matrix~$N\left(\Gamma_{h}\right)$. Then
\begin{align}
  \label{eq:abserr}
  \left|\psp{\varphi}{\psi} - \psp \varphi{\psi}_{\tilde{\Gamma}_{h}}
  \right| & = |\gamma_{h} \left(\varphi\right)^t
  \left(\Gamma_{h}-\tilde{\Gamma}_{h}\right) \overline{\gamma_{h}
    \left(\psi\right) }
  |\nonumber\\
  & \le \Vert \Gamma_{h}-\tilde{\Gamma}_{h} \Vert_{\infty} \Vert
  \gamma_{h}\left(\varphi\right) \Vert_{2} \Vert
  \gamma_{h}\left(\psi\right) \Vert_{2} .
\end{align}
Thus the absolute error depends on the $l^{2}$-norms of the vectors
$\gamma_{h}\left(\varphi\right)$ and $\gamma_{h}\left(\psi\right)$.
Since the numbers $N(\psp{F_N}{G_N})$ tend to grow rapidly with $N$ it
makes more sense to consider, as a measure of accuracy, the number of
correct digits, i.e. the relative error, instead of the absolute
error.  Unfortunately we are not able to estimate $\psp\varphi\psi$
from below and thus can not give a corresponding ``predictive'' bound
for the relative error in $N(\psp{F_N}{G_N})$.  However, it is clear
that if
\begin{equation*}\left| N(\psp{F_N}{G_N}) - \psp{F_N}{G_N}\right| <
\varepsilon \cdot \Vert \gamma_{h} \left(F_N\right) \Vert_{2} \Vert
\gamma_{h} \left(G_N\right) \Vert_{2}\le \varepsilon \cdot K
\end{equation*}
and $\left|\varepsilon\cdot K\right| < \left|N(\psp{F_N}{G_N})\right|$
then the relative error can be estimated by
\begin{equation*}
\frac{\left| N(\psp{F_N}{G_N}) - \psp{F_N}{G_N}\right|
}{\left|\psp{F_N}{G_N}\right|} < \frac {\varepsilon\cdot
  K}{\left|\left|N(\psp{F_N}{G_N})\right| - \varepsilon \cdot K
  \right|}
\end{equation*}
and we are therefore able to give a precise a posteriori bound for the
relative error. Provided that $N\left(\psp{F_N}{G_N}\right)$ is
nonzero (which turns out to be the case in all cases we considered so
far) we are always able to go back and decrease the initial
$\varepsilon$ until we have achieved any desired bound.  Since
rigorous, or even validated, numerics never give insurance against
e.g. errors in the actual implementation we also felt compelled to
apply our algorithms to a case where we know the results explicitly.

Recall that if $G$ is in the Maass spezialschaar and $F$ is an
eigenform, then $D_{F,G}(s) = \psp{F_1}{G_1} L(F,s) L(F,s)$. To verify
that the Gram data is correct as well as that the relative error
behaves well, we compare the computed values for the coefficients of
$D_{F,G}(s)$ with those of $L(F,s)$ (see~Tables~\ref{tbl:comparison}
and \ref{tbl:numerics}).  The coefficients of $L(F,s)$ can be computed
exactly using~\cite{Ryan-Skoruppa}, It seems that in most cases the
relative error can be estimated from above by a constant (not growing
very fast with respect to $N$) times $\Vert
\Gamma_{h}-\tilde{\Gamma}_{h}\Vert_{\infty}$. That is, the quotient of
$N(\psp{F_N}{G_N})$ divided by $\Vert \gamma_{h} \left(F_N\right)
\Vert_{2} \Vert \gamma_{h} \left(G_N\right) \Vert_{2}$ does not seem
to tend to zero too fast with $N$.

Another way we verify that the Gram data is correct is we check, as
in~\cite{Farmer-Ryan}, that the series $D_{FG}$ for $F$ and $G$ not in
spezialschaar, satisfies the functional equation shown
in~\cite{Kohnen-Skoruppa}.

\subsection{Runtimes}

To give a flavor of the amount of time needed to run an actual
computation of the Gram matrices we can consider weight $k=30$ where
we spent slightly less than 4h (14254s) on a 2666MHz processor to
compute the Gram matrices for $1\le N\le20$ with an error bound for
the Poincar\'e series of $10^{-50}.$ This involved computing a total
of 16340 Fourier coefficients.  We note that the time for a single
Poincar\'e series computation to a given precision is essentially
bounded by a constant independent of~$k$ and~$N$. Thus the runtime
only increases with the number of coefficients required.  The
dependence of the runtime on the precision requested is slightly more
complicated.  For the proof of the Main Theorem in the case $k=30$ we
needed a precision $10^{-90}$ and the corresponding Poincar\'e series
computations (with a working precision of $631$ digits) took more than
two weeks on the same system as mentioned above.

\section{Proof of the Main Theorem and its Supplement}
\label{sec:proof}

Fix a weight $k$ (between $20$ and $30$), and let $F_1$,\dots, $F_d$
be a basis of common Hecke eigenforms of the full space $M_k$ of
Siegel modular forms of weight $k$. We assume that $F_1$,\dots, $F_h$
are the interesting ones, i.e.~that they span the space $S_k^?$. We
want to show that the series $D_{F_i,F_j}(s)$ and $L(F_l,s)$ ($1\le i
\le j \le h$, $1 \le l \le d$) are linearly independent. For this we
set
\begin{equation*}
  M
  :=
  \left(
    \begin{smallmatrix}
      \psp{F_{1,1}}{F_{1,1}}/\gamma_{1,1}&\hdots&\psp{F_{1,n}}{F_{1,n}}/\gamma_{1,1}\\
      \psp{F_{1,1}}{F_{2,1}}/\gamma_{1,2}&\hdots&\psp{F_{1,n}}{F_{2,n}}/\gamma_{1,2}\\
      \vdots&&\vdots\\
      \psp{F_{e,1}}{F_{e,1}}/\gamma_{e,e}&\hdots&\psp{F_{e,n}}{F_{e,n}}/\gamma_{e,e}\\
      \lambda_{F_{1,1}}&\hdots&\lambda_{F_{1,n}}\\
      \vdots&&\vdots\\
      \lambda_{F_{d,1}}&\hdots&\lambda_{F_{d,n}}
    \end{smallmatrix}
  \right)
  .
\end{equation*}
Here $n=\frac{e(e+1)}2 + d$, we use $F_{i,N}$ for the $N$-th
Fourier-Jacobi coefficient of $F_i$, and we use
$\gamma_{i,j}=\psp{F_{i,1}}{F_{j,1}}$. The $\lambda_{F_{j,l}}$ are the
eigenvalues of the $F_j$ with respect to $T(l)$
(see~\eqref{eq:spinor-l-function}). We want to show $\det M \not=0$.
The eigenvalues $\lambda_{F_{j,l}}$ are totally real numbers and can
be computed exactly~\cite{Skoruppa2}. Assume we approximate them
numerically by rational numbers $N(\lambda_{F_{j,l}})$ such that
\begin{equation*}
  |\lambda_{F_{j,l}} - N(\lambda_{F_{j,l}})|<\epsilon
  .
\end{equation*}
Furthermore, using the method described in
Section~\ref{sec:computation} we compute rational approximations
$N((\psp{F_{i,l}}{F_{j,l}}/\gamma_{i,j})$ such that
\begin{equation}
  \label{eq:errors}
  |\psp{F_{i,l}}{F_{j,l}} - N(\psp{F_{i,l}}{F_{j,l}})|/|\gamma_{i,j}|<\epsilon
  .
\end{equation}
(The numerical calculations show that indeed $\gamma_{i,j}\not=0$ for
all $i,j$.)  If we denote by $N(M)$ the matrix obtained from $M$ by
replacing the coefficients $m_{p,q}$ by their respective rational
approximations $N(m_{p,q})$ we conclude that $\Vert M-N(M)\Vert_\infty
< \epsilon$.  As in section \ref{error-estimates} we construct an
interval matrix approximation to $M$, $I(M)$, with intervals
$I_{p,q}=\left(N\left(m_{p,q}\right)-\epsilon,N\left(m_{p,q}\right)+\epsilon\right)$
as entries. Since $M \in I(M)$ we are once again able to show that the
determinant of $M$ is non-zero by verifying that the determinant $D$
(which is an interval) of $I(M)$ does not contain zero.
If we compute the gram matrix $\Gamma_h$ to the precision $\epsilon_1$
we obtain an upper bound $\epsilon_2$ for the relative error
$\epsilon$ in \eqref{eq:errors} by
\begin{equation*}
  \epsilon_2 = \epsilon_1 \cdot \frac{\max_{i,j}{\Vert \gamma_h(F_{i,j})
      \Vert^{2}_2} }{\min_{i,j}\left| \gamma_{i,j} \right|}.
\end{equation*}
It turns out that by choosing $\epsilon_1$ as in
Table~\ref{tbl:epsilons} we obtained an $\epsilon_2$ such that the
determinant of the corresponding interval matrix $I(M)$ did not
contain zero.  This proves the Theorem.

For the actual computed values of upper and lower bounds of $\Vert
\gamma_h(F_{i,j}) \Vert_2$ and $\left| \gamma_{i,j} \right|$,
respectively, see Table \ref{tbl:numerics}.  The values of
$\epsilon_2$ obtained from these bounds, as well as the determinant
$D=\left( D_m - D_\delta, D_m + D_\delta\right)$ are given in
Table~\ref{tbl:epsilons}.  That zero is not contained in either of
these intervals is easily checked.  It should also be remarked that
for all our computations for a given $k$, we used a working precision
set high enough to represent $D$ with an absolute error comparable to
$\epsilon_2$.  As an example we computed the Poincar\'e series for
$k=30$ using $631$ digits precision.

The supplement was proved in a similar manner, using interval
arithmetic and the error bounds~\eqref{eq:abserr}.

\section{Concluding Remarks}
\label{sec:numerical-results}

We computed $D_{F,G}(s)$ numerically following the method of
Section~\ref{sec:computation} for those cusp forms $F$ and $G$ listed
by name in Table~\ref{tbl:Siegel-dimensions}.  The Fourier
coefficients of these Siegel cusp forms and the coefficients of their
spinor $L$-functions can be computed exactly using the
package~\cite{Ryan-Skoruppa} which implements the methods of
computation of~\cite{Skoruppa2}. As additional control for our
calculations, we verified that our numerical approximations of
$D_{F,G}$, for $F$ as in Table~\ref{tbl:Siegel-dimensions} and for a
Maass spezial form $G$, equals indeed (within our error bounds) the
spinor $L$-function of $F$, as predicted by the second theorem
in~\cite{Kohnen-Skoruppa} (see also Tables \ref{tbl:comparison} and
\ref{tbl:numerics}).

Andrianov asks~\cite[Concl.~2, p.~114]{Andrianov} if all Dirichlet
series satisfying the functional equation \eqref{eq:functional} are
linear combinations of the spinor $L$-series $L(H,s)$ where~$H$ runs
through the eigenforms in the full space $M_k$ of Siegel modular forms
of weight~$k$ on the full Siegel modular group.  We can answer this
question in the negative.  Namely, we computed all the Hecke
eigenforms in weights 20, 22, 24, 26, 28 and~s30 and their
corresponding $L$-series; in particular we computed the $L$-series for
the Eisenstein series, Klingen-Eisenstein series, Maass lifts, and
interesting forms that are Hecke eigenforms.  For all of the cases
listed in Table~\ref{tbl:Siegel-dimensions}, $D_{F,G}(s)$ was not a
linear combination of spinor $L$-series.

This raises the question of the nature of the series $D_{F,G}$ for
interesting forms $F$ and $G$. It is natural to ask first of all
whether they are $L$-functions at all in the sense of Selberg,
i.e.~whether they belong to the Selberg class. In fact, they do not,
since they do not possess an Euler product expansion (see the
supplement to the main theorem and Table~\ref{tbl:coeffs}). If we set
\begin{equation}
  \label{eq:D-tilde}
  \widetilde D_{F,G}(s)
  = 
  \psp{F_1}{G_1}^{-1}
  D_{F,G}(s+k-\frac32)
  .
\end{equation}
(the scalar product $\psp {F_1}{G_1}$ is different from $0$ in all
cases covered by Table~\ref{tbl:Siegel-dimensions}) the functional
equation~\eqref{eq:functional} becomes
\begin{equation*}
  \widetilde D_{F,G}^*(s)
  :=
  (2\pi)^{-2s-2k+3}
  \Gamma(s+k-\frac32)
  \Gamma(s+\frac12)
  \widetilde D_{F,G}(F,s)
  =\widetilde D_{F,G}^*(1-s)
  .
\end{equation*}
Accordingly to~\cite[Theorem~1]{Kohnen-Skoruppa} the function
$\widetilde D_{F,G}(s)$ has a pole at $s=\frac32$. This shows again
that $\widetilde D_{F,G}(s)$ is not in the Selberg class.

\clearpage
\section{Tables and Figures}
\label{sec:tables}

\begin{table}[ht]
  \begin{center}
    \caption{Dimensions of subspaces of $M_k$.  (KE =
      Klingen-Eisenstein series, MC = Maass spezialschaar cusp forms,
      Eigenforms = names of Galois orbits of interesting eigenforms
      following the naming conventions in~\cite{Skoruppa2})}
    \label{tbl:Siegel-dimensions}
    \begin{tabular}{|c||lllll|}
      \hline
      $k$   &   $S_k$  &   KE  &    MC   &   $S_k^?$ & Eigenforms\\
      \hline\hline
      20 & 5 & 2 & 2 & 1  & $\Upsilon_{20}$\\
      22 & 6 & 2 & 3 & 1  & $\Upsilon_{22}$\\
      24 & 8 & 3 & 3 & 2  & $\Upsilon_{24a}$, $\Upsilon_{24b}$\\
      26 & 7 & 2 & 3 & 2  & $\Upsilon_{26a}$, $\Upsilon_{26b}$\\
      28 & 10 & 3 & 4 & 3 & $\Upsilon_{28}$\\
      30 & 11 & 3 & 4 & 4 & $\Upsilon_{30}$\\
      \hline
    \end{tabular}
  \end{center}
\end{table}

\begin{table}[ht]
\tabcolsep=3pt
  \begin{center}
    \caption{Dimensions of the spaces $S_{k,N}$ for even weights $20
      \le k \le 30$ and indices $1\le N \le 20$.}
    \label{tbl:Jacobi-dimensions}
    \begin{tabular}{|r||rrrrrrrrrrrrrrrrrrrr|}
      \hline
      & 1 & 2 & 3 & 4 & 5 & 6 & 7 & 8 & 9 & 10 & 11 & 12 & 13 & 14 & 15 & 16 & 17 & 18 & 19 & 20 \\
      \hline\hline
      20 & 2 & 4 & 5 & 6 & 8 & 9 & 11 & 12 & 13 & 15 & 17 & 17 & 20 & 21 & 22 & 23 & 26 & 26 & 29 & 29 \\
      22 & 3 & 4 & 6 & 7 & 9 & 10 & 13 & 13 & 15 & 17 & 19 & 19 & 23 & 23 & 25 & 26 & 29 & 29 & 33 & 32 \\
      24 & 3 & 5 & 7 & 8 & 10 & 12 & 14 & 15 & 17 & 19 & 21 & 22 & 25 & 26 & 28 & 29 & 32 & 33 & 36 & 36 \\
      26 & 3 & 5 & 7 & 8 & 11 & 12 & 15 & 16 & 18 & 20 & 23 & 23 & 27 & 28 & 30 & 31 & 35 & 35 & 39 & 39 \\
      28 & 4 & 6 & 8 & 10 & 12 & 14 & 17 & 18 & 20 & 23 & 25 & 26 & 30 & 31 & 33 & 35 & 38 & 39 & 43 & 43 \\
      30 & 4 & 6 & 9 & 10 & 13 & 15 & 18 & 19 & 22 & 24 & 27 & 28 & 32 & 33 & 36 & 37 & 41 & 42 & 46 & 46 \\
      \hline
    \end{tabular}
  \end{center}
\end{table}

\begin{table}[ht]
  \begin{center}
    \caption{For the Maass lift $G$ of the first Fourier-Jacobi
      coefficient $\phi$ of $\Upsilon24a$, we compute numerically the
      coefficients $a(N)$ of $D_{\Upsilon24a,G}(s)/\psp{\phi}{\phi}$
      by the method in Section~\ref{sec:computation} and we compute
      exactly the coefficients $b(N)$ of $L(\Upsilon24a,s)$ (which are
      integers).  This is an additional check that our method of
      computation gives correct results since a theorem
      in~\cite{Kohnen-Skoruppa} tells us that, in this case, these two
      Dirichlet series coincide.  }
    \label{tbl:comparison}
    \begin{tabular}{|c||l|l|}\hline
      $N$ & $a(N)$ & $b(N)$\\
      \hline\hline
      1 & $\hphantom{-}1.0000000000000000000000000000 $    &    $\hphantom{-} 1$\\
      2 & $-5.5603200000000000000000000000 \cdot10^{6}$  &     $-5560320$\\
      3 & $-5.3017924680000000000000000000\cdot 10^{10}$ &     $-53017924680$\\
      4 & $\hphantom{-}4.3592282275840000000000000000\cdot 10^{12}$  &     $\hphantom{-}4359228227584$\\
      5 & $-3.3324163624500000000000000000\cdot 10^{13}$ &     $-33324163624500$\\
      6 & $\hphantom{-}2.9479662695669760000000000000\cdot 10^{17}$  &     $\hphantom{-}294796626956697600$\\
      7 & $\hphantom{-}8.9548405531223548000000000000\cdot 10^{18}$  &     $\hphantom{-}8954840553122354800$\\
      8 & $-1.7002266475219648512000000000\cdot 10^{20}$ &     $-170022664752196485120$\\
      9 & $\hphantom{-}3.3581886607436193369000000000\cdot 10^{19}$  &     $\hphantom{-}33581886607436193369$\\
      10& $\hphantom{-}1.8529301348457984000000000000\cdot 10^{20}$  &     $\hphantom{-}185293013484579840000$\\
      \hline
    \end{tabular}
  \end{center}
\end{table}

\begin{table}[ht]
  \begin{center}
    \caption{The first 18 coefficients of $\widetilde
      D_{\Upsilon24x,\Upsilon24y}(s)$ (see\eqref{eq:D-tilde}), where
      $x,y \in \{a,b\}$, where $\Upsilon24a$, $\Upsilon24b$ denote the
      two interesting eigenforms in weight 24.  Note that $\widetilde
      D_{\Upsilon24x,\Upsilon24y}(s)$ satisfies a functional equation
      with respect to $s\mapsto 1-s$. More data and this data with
      more accuracy are available at~\cite{Data}.}
    \label{tbl:coeffs}
    \begin{tabular}{|c||c|c|c|}\hline
      &    $\widetilde D_{\Upsilon24a,\Upsilon24a}(s)$ & $\widetilde D_{\Upsilon24a,\Upsilon24b}(s)$ & $\widetilde D_{\Upsilon24b,\Upsilon24b}(s)$\\
      \hline\hline
      1  &  $1.0000000000\dots$ & $\hphantom{-}1.0000000000\dots$ &  $1.0000000000\dots$\\
      2  &  $1.2562996887\dots$ & $-1.6301379426\dots$ & $1.1807313893\dots$\\
      3  &  $1.7810603106\dots$ & $\hphantom{-}2.0303021423\dots$ &  $0.8677710052\dots$\\
      4  &  $2.0741433142\dots$ & $-3.4104218951\dots$ & $2.8372669032\dots$\\
      5  &  $2.8899783797\dots$ & $\hphantom{-}0.0053403337\dots$ &  $2.3422724376\dots$\\
      6  &  $2.6795441187\dots$ & $\hphantom{-}3.4130123501\dots$ &  $2.5830326424\dots$\\
      7  &  $3.6002676445\dots$ & $\hphantom{-}1.2502722224\dots$ &  $2.2422068960\dots$\\
      8  &  $3.0295878336\dots$ & $-0.2164387720\dots$ & $3.5760646077\dots$\\
      9  &  $3.9970248055\dots$ & $-1.9659335003\dots$ & $3.2031268531\dots$\\
      10  &  $2.9874427387\dots$ & $-2.2751217843\dots$ & $3.5506087520\dots$\\
      11  &  $3.5420885329\dots$ & $-0.5800680650\dots$ & $2.6657794870\dots$\\
      12  &  $2.9887639258\dots$ & $\hphantom{-}2.3742550783\dots$ &  $3.3541225360\dots$\\
      13  &  $4.7252631756\dots$ & $-1.4101441627\dots$ & $3.0042305971\dots$\\
      14  &  $3.6881022526\dots$ & $\hphantom{-}2.8085205331\dots$ &  $2.5295615640\dots$\\
      15  &  $3.2033969064\dots$ & $-2.1277658831\dots$ & $2.9089500309\dots$\\
      16  &  $4.8466409087\dots$ & $\hphantom{-}1.6172735165\dots$ &  $5.3475003622\dots$\\
      17  &  $5.5339720201\dots$ & $-2.0014367544\dots$ & $4.1580587188\dots$\\
      18  &  $4.5912959032\dots$ & $-2.3762191782\dots$ & $4.6382023273\dots$\\\hline
    \end{tabular} 
  \end{center}
\end{table}

\begin{table}[ht]
  \begin{center}
      \caption{Computed upper bounds for the $l^2$ norms of $\gamma_h\left(F_{i,j}\right)$ and lower bounds for the inner products $\gamma_{i,j}=\psp{F_{i,1}}{F_{j,1}}$.
The last column contains the maximum error in the comparison between the Hecke eigenvalues $\lambda_{F_{i,j}}$ and the numerical approximations, $\tilde{\lambda}_{F_{i,j}}$, 
  computed using the inner product of $F_i$ with a form from the Maass spezialschaar. 
In all cases $1\le i,j\le n$ where $n$, given in the second column, is the dimension of the space of $L$-series which we want to show is linearly independent. }
    \label{tbl:numerics}
    \begin{tabular}{|c||c|c|c|c|}
      \hline 
      $k$ &$n$ & $\max\left(\left\Vert \gamma_h\left(F_{i,j}\right) \right\Vert _{2}\right)$  
          & $\min \left| \gamma_{i,j}\right|$ 
          & $\max |\lambda_{F_{i,j}}-\tilde{\lambda}_{F_{i,j}} |$  \tabularnewline
      \hline
      \hline 
      20  & 5 &$1.2\cdot10^{09}$ & $1\cdot10^{1}\hphantom{{}^{-1}}$ &  $3.2\cdot10^{-45}$  \tabularnewline
      \hline 
      22 & 6 &$8.2\cdot10^{04}$ & $5\cdot10^{-11}$ &  $9.6\cdot10^{-56}$ \tabularnewline
      \hline 
      24 &  11&$7.8\cdot10^{09}$ & $1\cdot10^{-10}$ & $1.7\cdot10^{-55}$  \tabularnewline
      \hline 
      26 &  10&$5.2\cdot10^{10}$ & $1\cdot10^{-12}$ & $7.5\cdot10^{-72}$ \tabularnewline
      \hline 
      28 &  16&$8.8\cdot10^{16}$ & $1\cdot10^{0}\hphantom{{}^{-1}}$ &  $2.2\cdot10^{-71}$ \tabularnewline
      \hline 
      30 &  21&$2.9\cdot10^{12}$ & $3\cdot10^{-25}$ & $5.6\cdot10^{-96}$ \tabularnewline
      \hline
    \end{tabular}
  \end{center}
\end{table}

\begin{table}[ht]
  \begin{center}
    \caption{The values used for $\epsilon_1$ and the computed values of $\epsilon_2$
 as well as the computed values of the interval determinant of $I(M)$, $D=\left(D_m-D_\delta,D_m+D_\delta\right)$.}
    \label{tbl:epsilons}
    \begin{tabular}{|c||c|c|c|c|}
      \hline 
      $k$ & $\epsilon_{1}$ & $\epsilon_{2}$ & $D_{m}$ & $D_{\delta}$  \tabularnewline
      \hline
      \hline 
      20 & $1\cdot10^{-40}$ & $7.9\cdot10^{-26}$ & $-7.05\cdot10^{054}$ & $1.26\cdot10^{033}$ \tabularnewline
      \hline 
      22 & $3\cdot10^{-50}$ & $1.2\cdot10^{-23}$ & $-6.63\cdot10^{080}$ & $8.87\cdot10^{060}$ \tabularnewline
      \hline 
      24 & $2\cdot10^{-50}$ & $2.8\cdot10^{-13}$ & $\hphantom{-}7.38\cdot10^{209}$ & $1.63\cdot10^{206}$ \tabularnewline
      \hline 
      26 & $7\cdot10^{-65}$ & $4.0\cdot10^{-27}$ & $-6.80\cdot10^{190}$ & $1.27\cdot10^{173}$ \tabularnewline
      \hline 
      28 & $1\cdot10^{-65}$ & $4.8\cdot10^{-33}$ & $-3.21\cdot10^{399}$ & $6.45\cdot10^{384}$  \tabularnewline
      \hline
      30 & $1\cdot10^{-90}$ & $2.9\cdot10^{-25}$ & $\hphantom{-}1.53\cdot10^{607}$ & $1.13\cdot10^{607}$  \tabularnewline
      \hline
    \end{tabular}
  \end{center}
\end{table}

\clearpage
\bibliography{lfg} \bibliographystyle{amsplain}

\end{document}